\newtheorem{theorem}{Theorem}[section]
\newtheorem{claim}[theorem]{Claim}
\newtheorem{corollary}[theorem]{Corollary}
\newcommand{\R}{\ensuremath{\mathbb{R}}}
\newcommand{\Z}{\ensuremath{\mathbb{Z}}}
\newcommand{\lat}{\mathcal{L}}
\renewcommand{\epsilon}{\varepsilon}
\newcommand{\vol}{\mathrm{vol}}
\DeclareMathOperator{\spn}{span}
\renewcommand{\vec}[1]{\ensuremath{\boldsymbol{#1}}}
\newcommand{\basis}{\ensuremath{\mathbf{B}}}
\DeclarePairedDelimiter\inner{\langle}{\rangle}
\DeclarePairedDelimiter\floor{\lfloor}{\rfloor}
\newcommand{\numberinball}[2]{N_{\leq #1}(#2)}
\newcommand{\numberonsphere}[2]{N_{= #1}(#2)}
\newcommand{\mystack}[2]{\substack{#1\\#2}}
\begin{document}
	
		\title{A simple proof of a reverse Minkowski theorem for integral lattices}
		\author{
			Oded Regev\thanks{Courant Institute of Mathematical Sciences, New York
				University. Supported by the Simons Collaboration on Algorithms and Geometry, by a Simons Investigator Award from the Simons Foundation, and by the National Science Foundation (NSF) under Grant No.~CCF-1320188. Any opinions, findings, and conclusions or recommendations expressed in this material are those of the authors and do not necessarily reflect the views of the NSF.}\\
			\and
			Noah Stephens-Davidowitz\thanks{Cornell University.  Supported in part by the NSF under Grant No.~CCF-2122230.}\\
			\texttt{noahsd@gmail.com}
		}
		\date{}
		\maketitle

    \begin{abstract}
        We prove that for any integral lattice $\lat \subset \R^n$ (that is, a lattice $\lat$ such that the inner product $\langle \vec{y}_1,\vec{y}_2 \rangle$ is an integer for all $\vec{y}_1, \vec{y}_2 \in \lat$) and any positive integer $k$,
        \[
            |\{ \vec{y} \in \lat \ : \ \|\vec{y}\|^2 = k\}| \leq 2 \binom{n+2k-2}{2k-1}
            \; ,
        \]
        giving a nearly tight reverse Minkowski theorem for integral lattices.
    \end{abstract}
  
		\section{Introduction}

		A lattice $\lat \subset \R^d$ is the set of integer linear combinations of linearly independent basis vectors $\basis = (\vec{b}_1,\ldots, \vec{b}_n) \in \R^{d \times n}$. 
		 We call $n$ the \emph{rank} of the lattice and $d$ the \emph{dimension}. We often implicitly assume that the lattice is \emph{full rank}, i.e., that $n = d$.
		
		A lattice $\lat$ is called an \emph{integral lattice} if $\inner{\vec{y}_1, \vec{y}_2} \in \Z$ for all $\vec{y}_1, \vec{y}_2 \in \lat$. Such lattices arise naturally in a number of contexts, from number theory to sphere packing to cryptography.

		We are interested in the largest number of short vectors in an integral lattice $\lat \subset \R^n$, i.e., we study how large \[\numberinball{k}{\lat} := |\{\vec{y} \in \lat : \|\vec{y}\|^2 \leq k\}|\] can be. For example, it is not hard to see that for any integral lattice,  $\numberinball{1}{\lat} \leq \numberinball{1}{\Z^n} = 2n+1$, i.e., $\Z^n$ has the maximal number of points with length at most $1$ among all integral lattices. The proof is easy but instructive. First, notice that integrality implies that $\lat$ cannot contain any non-zero points with length strictly less than $1$ since $\|\vec{y}\|^2 = \langle \vec{y}, \vec{y} \rangle \in \Z$ for $\vec{y} \in \lat$. Next, notice that any vectors $\vec{y}_1,\vec{y}_2 \in \lat$ of length $1$ must satisfy $\langle \vec{y}_1,\vec{y}_2\rangle \in \{-1,0,1\}$ by the Cauchy-Schwarz inequality. So, up to sign, all of the distinct vectors with length $1$ in $\lat$ must be pairwise orthogonal.  Of course, any collection of pairwise orthogonal non-zero vectors in $n$ dimensions has size at most $n$. Therefore, after accounting for $\vec{0} \in \lat$ and signs, we immediately see that $\numberinball{1}{\lat} \le 2n+1$.

While a tempting conjecture, it is \emph{not} true that $\numberinball{k}{\lat} \leq \numberinball{k}{\Z^n}$ for all $k \geq 1$. For example, the eight-dimensional lattice $E_8$ is integral with $240$ points of length $\sqrt{2}$ (and no points with length $1$), so that $\numberinball{2}{E_8} = 241$, whereas $\numberinball{2}{\Z^8} = 129$. (But see Eq.~\eqref{eq:tight_all_params} for a plausible unproven variant of this inequality that replaces the discrete point-counting function with a smooth approximation, namely, the Gaussian mass or lattice theta function.)
 	
  Our main result is an upper bound on $\numberinball{k}{\lat}$ that is not much larger than $\numberinball{k}{\Z^n}$.
		
		\begin{theorem}
			\label{thm:unimodular_RM}
			For any integral lattice $\lat \subset \R^n$ and positive integer $k$,
			\begin{align}
	   	\label{eq:bound_ball}
			\numberinball{k}{\lat} \leq 2 \binom{n + 2k-1}{2k-1}-1
				\; .
			\end{align}
			In fact,
			\begin{align}
	   	\label{eq:tighterbound_sphere}
			 \numberonsphere{k}{\lat} := |\{ \vec{y} \in \lat \ : \ \|\vec{y}\|^2 = k\}| \leq  2 \binom{n + 2k-2}{2k-1}
			 \; .
			\end{align}
    Moreover, for any constant positive integer $k$,
    \begin{align}\label{eq:constkbound}
        \numberinball{k}{\lat} \leq 2^{k} (k-1)! \cdot n^{k} + o(n^{k})
        \; ,
    \end{align}
    where $o(n^k)$ represents some function $f_k(n)$ such that $\lim_{n \to \infty} f_k(n)/n^k = 0$.
\end{theorem}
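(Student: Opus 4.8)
The plan is to prove the sharp sphere bound \eqref{eq:tighterbound_sphere} by a polynomial (linear-algebra) argument, to deduce the ball bound \eqref{eq:bound_ball} from it by summation, and finally to obtain \eqref{eq:constkbound} by a separate structural argument, since, as I explain below, the polynomial method alone is too lossy for \eqref{eq:constkbound}. For \eqref{eq:tighterbound_sphere}, let $V = \{\vec y \in \lat : \length{\vec y}^2 = k\}$ and pick one representative $\vec y_1, \dots, \vec y_m$ from each antipodal pair $\{\vec y, -\vec y\}$, so that $\numberonsphere{k}{\lat} = 2m$. For $i \neq j$ the integer $c_{ij} := \inner{\vec y_i, \vec y_j}$ satisfies $|c_{ij}| \le k$ by Cauchy--Schwarz, with equality only when $\vec y_i = \pm \vec y_j$; since the representatives are distinct and non-antipodal, $c_{ij} \in \{-(k-1), \dots, k-1\}$. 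To each $i$ I associate the degree-$(2k-1)$ polynomial
\[
  F_i(\vec x) = \inner{\vec x, \vec y_i}\prod_{c=1}^{k-1}\bigl(\inner{\vec x, \vec y_i}^2 - c^2\bigr),
\]
which is an \emph{odd} function of $\vec x$. By construction $F_i(\vec y_j) = c_{ij}\prod_{c=1}^{k-1}(c_{ij}^2 - c^2) = 0$ for $i \neq j$ (some factor vanishes because $c_{ij}^2 \in \{0,1,\dots,(k-1)^2\}$), while $F_i(\vec y_i) = k\prod_{c=1}^{k-1}(k^2 - c^2) \neq 0$, so the $F_i$ are linearly independent. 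The key point is that the $\vec y_j$ all lie on the sphere of squared-radius $k$, so these restrictions remain linearly independent (they already separate the $\vec y_j$), yet the restriction of any odd polynomial of degree $\le 2k-1$ lies in $\bigoplus_{j \text{ odd},\, j \le 2k-1} \mathcal H_j$, where $\mathcal H_j$ is the space of degree-$j$ spherical harmonics. Using $\dim \mathcal H_j = \binom{n+j-1}{j} - \binom{n+j-3}{j-2}$, the sum over odd $j \le 2k-1$ telescopes to exactly $\binom{n+2k-2}{2k-1}$. Therefore $m \le \binom{n+2k-2}{2k-1}$, giving \eqref{eq:tighterbound_sphere}.

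Next, \eqref{eq:bound_ball} follows by summation. Since $\lat$ is integral, every squared norm is a nonnegative integer, so $\numberinball{k}{\lat} = 1 + \sum_{j=1}^{k} \numberonsphere{j}{\lat} \le 1 + 2\sum_{j=1}^{k}\binom{n+2j-2}{2j-1}$. Writing $\binom{n+2j-2}{2j-1} = \binom{n-1+(2j-1)}{n-1}$, I recognize these as the odd-index terms of the hockey-stick identity $\sum_{m=0}^{2k-1}\binom{n-1+m}{n-1} = \binom{n+2k-1}{2k-1}$. Discarding the remaining (nonnegative) even-index terms, whose sum is at least the $m=0$ term $\binom{n-1}{n-1}=1$, yields $\sum_{j=1}^k \binom{n+2j-2}{2j-1} \le \binom{n+2k-1}{2k-1} - 1$, and hence $\numberinball{k}{\lat} \le 2\binom{n+2k-1}{2k-1} - 1$, which is \eqref{eq:bound_ball}.

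The asymptotic statement \eqref{eq:constkbound} is the hard part, and it needs a genuinely different idea: expanding \eqref{eq:bound_ball} gives only $\numberinball{k}{\lat} = O(n^{2k-1})$, which for $k \ge 2$ is far larger than the claimed $O(n^k)$, essentially because the delta-function polynomials $F_i$ have degree $2k-1$ and no degree-$k$ polynomial can separate the $2k-1$ possible integer inner-product values. To recover the correct $n^k$ scaling I would exploit the lattice (not merely metric) structure through the orthogonal decomposition $\lat = \lat_1 \perp \dots \perp \lat_t$ into indecomposable integral lattices of ranks $n_1, \dots, n_t$ with $\sum_s n_s = n$. A norm-$k$ vector decomposes uniquely as $\vec y = \sum_s \vec y_s$ with $\vec y_s \in \lat_s$ and $\sum_s \length{\vec y_s}^2 = k$, so it is supported on at most $k$ components and
\[
  \numberonsphere{k}{\lat} = \sum_{\substack{k_1 + \dots + k_t = k \\ k_s \ge 0}} \prod_{s} \numberonsphere{k_s}{\lat_s} .
\]
Granting a per-component estimate $\numberonsphere{j}{\lat_s} = O(n_s^{\,j})$ for each fixed $j$, the elementary inequality $\sum_s n_s^{\,j} \le (\sum_s n_s)^j = n^j$ shows that every composition contributes at order $n^k$, so that $\numberonsphere{k}{\lat} = O(n^k)$ and, after summing the finitely many contributions (the dominant one splitting $k$ into rank-one $\cong \Z$ components, reproducing the $\Z^n$-type count), $\numberinball{k}{\lat} \le 2^k(k-1)!\,n^k + o(n^k)$.

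The main obstacle is precisely the per-component bound $\numberonsphere{j}{\lat_s} = O(n_s^{\,j})$ for an indecomposable integral lattice, since the polynomial method only yields $O(n_s^{\,2j-1})$ and indecomposability is invisible to it. Here I would induct on the minimum norm: minimum norm $1$ splits off a $\Z$ summand (impossible for an indecomposable $\lat_s \not\cong \Z$), and the case $j=2$ reduces to the classical fact that the norm-$2$ vectors of an integral lattice form a root system, with $O(n_s^2)$ elements. The difficulty is handling larger norms in a minimum-norm-$\ge 2$ lattice without inflating the exponent: naive projection onto the hyperplane orthogonal to a minimal vector rescales squared norms by the minimum norm $\mu$, turning norm $j$ into norm $\approx \mu j$ and degrading $n^j$ to $n^{\mu j}$. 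Making this recursion norm-preserving, or replacing it by a charging argument that assigns each norm-$j$ vector to a bounded-size structured ``fingerprint,'' is where the real work lies and is also what pins down the sharp constant $2^k(k-1)!$.
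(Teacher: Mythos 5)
Your treatment of Eqs.~\eqref{eq:tighterbound_sphere} and \eqref{eq:bound_ball} is correct, and it takes a genuinely more self-contained route than the paper. The paper black-boxes the Delsarte--Goethals--Seidel bound (Theorem~\ref{thm:few_angles}): normalizing the norm-$k$ vectors to unit vectors puts their pairwise inner products in $\{0,1/k,\ldots,(k-1)/k,1\}$ in absolute value, and DGS with $|A|=k-1$ gives $\numberonsphere{k}{\lat}\le 2\binom{n+2k-2}{2k-1}$ directly. You instead reprove the needed special case of DGS by the polynomial method (essentially Koornwinder's argument, which the paper cites as~\cite{KooNoteAbsolute76}): the odd degree-$(2k-1)$ polynomials $F_i$ are indeed a valid system of separating functions, the integrality of $\inner{\vec y_i,\vec y_j}$ together with the Cauchy--Schwarz endpoint analysis correctly forces $F_i(\vec y_j)=0$ for $i\ne j$, and your telescoping computation of $\sum_{j\,\mathrm{odd},\,j\le 2k-1}\dim\mathcal H_j=\binom{n+2k-2}{2k-1}$ checks out. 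Your derivation of \eqref{eq:bound_ball} via the hockey-stick identity matches the paper's summation step. What your approach buys is a proof of the first two bounds from first principles; what it costs is nothing, since the exploitation of integrality (Cauchy--Schwarz plus integer inner products) is identical in both arguments.

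However, your argument for Eq.~\eqref{eq:constkbound} has a genuine gap, which you yourself flag. The paper obtains \eqref{eq:constkbound} by invoking Theorem~1.4 of Balla, Dr\"axler, Keevash, and Sudakov~\cite{BDKSEquiangularLinesSpherical2018} (stated as Theorem~\ref{thm:few_angles_2}), which improves the DGS exponent from $2k-1$ to $k$ for spherical codes with a constant number of allowed angles; this applies verbatim to the same normalized vector system used for \eqref{eq:tighterbound_sphere}, with no lattice-specific work needed. Your proposed replacement --- decomposing $\lat$ into indecomposable orthogonal summands and bounding $\numberonsphere{j}{\lat_s}=O(n_s^{\,j})$ per summand --- rests entirely on that per-component estimate, which you do not prove and which is not easier than the original problem: indecomposability gives no evident control on the number of vectors of a fixed norm, and as you note, the natural recursions (projecting off a minimal vector) inflate the exponent. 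Since the combinatorial bookkeeping over compositions $k_1+\cdots+k_t=k$ is the easy part and the per-component bound is the entire content, \eqref{eq:constkbound} remains unproven in your write-up. The fix is simply to cite the BDKS spherical-code bound as the paper does (noting also the paper's final step that $\numberonsphere{i}{\lat}=o(n^k)$ for $i<k$, so the ball count inherits the same leading term).
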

		
		Notice that for $k=1$, the right-hand side of Eq.~\eqref{eq:bound_ball} is $2n+1$, which is tight when $\lat = \Z^n$. 
  For $k=2$, we include a tight bound in Section~\ref{sec:small_radii}.
  For larger $k$, 
  we note (in Appendix~\ref{app:Zn_lower_bound}) that for $n \geq 6$, 
  \begin{equation}
      \label{eq:Zn_lower_bound}
    \numberonsphere{k}{\Z^n} \geq \binom{\floor{n/2}+k-1}{k}
    \; ,
  \end{equation}
  so that Eq.~\eqref{eq:bound_ball} is essentially tight up to a factor of two in both $n$ and $k$. Similarly, for constant $k$, Eq.~\eqref{eq:constkbound} is tight up to a multiplicative constant (depending on $k$).

		We also derive the following immediate corollary, which bounds the \emph{Gaussian mass} (or theta function) of an integral lattice $\lat \subset \R^n$, and in particular shows that most of the Gaussian mass is contributed by $\vec0$. Such bounds are useful, e.g., for using Fourier-analytic techniques to understand the geometry of the dual lattice (as in, e.g.,~\cite{banaszczykNewBoundsTransference1993}). We refer the reader to \cite{DRStrongReverse16,RSReverseMinkowski17} for more discussion of such bounds and their usefulness. 
		
		\begin{corollary}
		    \label{cor:gaussian}
		    For any integral lattice $\lat \subset \R^n$,
		    \[
		        \sum_{\vec{y} \in \lat} \exp(-\tau^* \|\vec{y}\|^2) \leq 1+ C/n
		        \; ,
		    \]
		    where $\tau^* := 2\log(2n)$ and $C>0$ is a universal constant.
		\end{corollary}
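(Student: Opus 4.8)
The plan is to exploit integrality to reduce the Gaussian mass to a sum over integer squared-lengths, and then apply the sphere bound \eqref{eq:tighterbound_sphere} of Theorem~\ref{thm:unimodular_RM} term by term. Since $\lat$ is integral, every $\vec y \in \lat$ has $\|\vec y\|^2 \in \{0,1,2,\ldots\}$, so grouping the points by their squared length gives
\[
\sum_{\vec y \in \lat} \exp(-\tau^* \|\vec y\|^2) = \sum_{k=0}^\infty \numberonsphere{k}{\lat}\, e^{-\tau^* k} \; ,
\]
where the $k=0$ term equals exactly $1$ (contributed by $\vec 0$). It therefore suffices to show that the remaining sum over $k \ge 1$ is at most $C/n$. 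Noting that $e^{-\tau^*} = (2n)^{-2}$ and invoking \eqref{eq:tighterbound_sphere}, I would bound
\[
\sum_{k=1}^\infty \numberonsphere{k}{\lat}\, e^{-\tau^* k} \le \sum_{k=1}^\infty 2\binom{n+2k-2}{2k-1}(2n)^{-2k} = \frac{1}{n}\sum_{k=1}^\infty \binom{n+2k-2}{2k-1}(2n)^{-(2k-1)} \; ,
\]
where I have pulled out one factor of $(2n)^{-1}$ to expose the target $1/n$.

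The remaining task is to show that the last sum is bounded by an absolute constant. Writing $m = 2k-1$ and using $\binom{n+m-1}{m} \le (n+m-1)^m/m!$, each summand is at most $\frac{1}{m!}\bigl(\frac{n+m-1}{2n}\bigr)^m$. I would split the sum at $m = n$. For the head $m \le n$, the base satisfies $\frac{n+m-1}{2n} < 1$, so these terms are dominated by $\sum_{m \ge 1} 1/m! < e$. For the tail $m > n$, I would use $n+m-1 \le 2m$ to obtain $\frac{1}{m!}(m/n)^m \le (e/n)^m$ via $m! \ge (m/e)^m$; for $n > e$ this is a rapidly converging geometric tail that is negligible. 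Combining the two regimes, the sum over $k \ge 1$ is $O(1/n)$, which is the claimed bound.

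The only step requiring genuine care is the tail estimate: once $k$ is comparable to or larger than $n$, the base $\frac{n+m-1}{2n}$ of the power exceeds $1$, and the naive approximation $\binom{n+2k-2}{2k-1} \approx n^{2k-1}/(2k-1)!$ no longer keeps it below one. The observation that resolves this is that the factorial $m!$ in the denominator still dominates $m^m/n^m$ once $m \gtrsim n$, so the tail decays faster than any polynomial in $1/n$ and contributes nothing to leading order. Every other step is a routine manipulation; in fact the computation shows that one may take $C = 2$ for all sufficiently large $n$, with the constant enlarged if necessary to absorb the finitely many small values of $n$.
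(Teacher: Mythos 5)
Your proposal is correct, and it follows the same skeleton as the paper's proof: decompose the Gaussian mass into spherical shells using integrality, note that $e^{-\tau^*} = (2n)^{-2}$, and apply the per-shell bound of Eq.~\eqref{eq:tighterbound_sphere} to reduce everything to estimating $\sum_{k\ge 1}\binom{n+2k-2}{2k-1}(2n)^{-(2k-1)}$. The only place you diverge is in how that series is summed: the paper evaluates it exactly via the negative binomial identity $\sum_{k\ge 0} x^k\binom{n+k-1}{k} = (1-x)^{-n}$, obtaining the closed form $\tfrac12\bigl((1-1/(2n))^{-n}-(1+1/(2n))^{-n}\bigr)$, which tends to $\sinh(1/2)$ and hence yields a sharper constant, whereas you bound each term by $\frac{1}{m!}\bigl(\frac{n+m-1}{2n}\bigr)^m$ and split at $m=n$, getting a constant close to $e-1$. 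Your head/tail analysis is sound --- in particular you correctly identify and handle the only delicate point, namely that the base of the power exceeds $1$ once $m > n$, where $m! \ge (m/e)^m$ saves the day --- so both routes give a valid universal $C$; the paper's is just shorter and tighter.
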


  This bound is quite tight since 
  \[
  \sum_{\vec{z} \in \Z^n} \exp(-\tau^* \|\vec{z}\|^2) 
  =
  \Big( \sum_{z \in \Z} \exp(-\tau^* z^2) \Big)^n 
  \geq 
  (1 + 2\exp(-\tau^*))^n
  \geq
  1 + 2n \exp(-\tau^*) 
  = 
  1+1/(2n) \; .
  \]

	    \subsection{Some context: reverse Minkowski theorems}
	     
	    One can view Theorem~\ref{thm:unimodular_RM} as a ``reverse Minkowski theorem,'' as we explain next. Minkowski's celebrated theorem~\cite{MinGeometrieZahlen10} says that a lattice with small determinant must have short non-zero vectors, where the lattice determinant is defined as $\det(\lat) := \det(\basis^T \basis)^{1/2}$. Most relevant for us is the following point-counting form of Minkowski's theorem due to Blichfeldt and van der Corput,\footnote{They actually showed the slightly stronger bound $\numberinball{k}{\lat} \geq 2\floor{2^{-n} \cdot \vol(\sqrt{k} B_2^n)}+1$ and considered arbitrary norms, not just $\ell_2$.  (See, e.g.,~\cite[Thm.~1 of Ch.~2, Sec.~7]{GLGeometryNumbers87}.)} 
		which says that a lattice with small determinant must have \emph{many} short points.
		
		\begin{theorem}[\cite{vanVerallgemeinerungMordellschen36}]
			\label{thm:minkowski}
			For any lattice $\lat \subset \R^n$ with $\det(\lat) \leq 1$ and $k > 0$,
			\begin{equation*}
			\numberinball{k}{\lat}
			\geq 2^{-n} \cdot \vol(\sqrt{k} B_2^n) 
			\approx \Big(\frac{\pi ek}{2n}\Big)^{n/2}
			\; . 
			\end{equation*}
		\end{theorem}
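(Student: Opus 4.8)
The plan is to prove this as a special case of Blichfeldt's pigeonhole argument, treating the ball $K := \sqrt{k}B_2^n$ as a generic origin-symmetric convex body and invoking the hypothesis $\det(\lat) \le 1$ only at the very end. The core idea is to periodize the indicator of the half-body $\tfrac12 K$ over $\lat$, find a coset that is covered many times, and then extract many short lattice vectors from its difference set.

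First I would consider the $\lat$-periodic function $f(\vec{x}) := \sum_{\vec{v}\in\lat} \mathds{1}_{\frac12 K}(\vec{x}-\vec{v})$ on $\R^n$. Integrating over a fundamental domain $\R^n/\lat$ and unfolding the sum gives $\int_{\R^n/\lat} f = \vol(\tfrac12 K) = 2^{-n}\vol(K)$, while the domain itself has volume $\det(\lat)$. Hence the average value of $f$ is $2^{-n}\vol(K)/\det(\lat)$, so there exists a point $\vec{x}_0$ with $f(\vec{x}_0) \ge 2^{-n}\vol(K)/\det(\lat)$. Equivalently, there are at least $m := \lceil 2^{-n}\vol(K)/\det(\lat)\rceil$ distinct lattice vectors $\vec{v}_1,\dots,\vec{v}_m \in \lat$ with $\vec{x}_0 - \vec{v}_i \in \tfrac12 K$ for every $i$.

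The crux is then the difference-set observation: for an origin-symmetric convex body one has $\tfrac12 K - \tfrac12 K = K$, so for each $i$ the vector $\vec{w}_i := \vec{v}_i - \vec{v}_1 = (\vec{x}_0 - \vec{v}_1) - (\vec{x}_0 - \vec{v}_i)$ lies in $K$. These $\vec{w}_i$ are $m$ distinct lattice points (distinct since the $\vec{v}_i$ are), and each lies in $K = \sqrt{k}B_2^n$, i.e.\ satisfies $\|\vec{w}_i\|^2 \le k$. Therefore $\numberinball{k}{\lat} \ge m \ge 2^{-n}\vol(\sqrt{k}B_2^n)/\det(\lat)$, and since $\det(\lat)\le 1$ this is at least $2^{-n}\vol(\sqrt{k}B_2^n)$, which is exactly the claimed bound.

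Finally, the stated asymptotic follows by substituting $\vol(\sqrt{k}B_2^n) = k^{n/2}\pi^{n/2}/\Gamma(n/2+1)$ into the bound and applying Stirling's approximation to $\Gamma(n/2+1)$, which yields $2^{-n}\vol(\sqrt{k}B_2^n) \approx (\pi e k/(2n))^{n/2}$. I do not expect a genuine obstacle here: this is a classical averaging argument, and the only points needing care are the interchange of sum and integral in the periodization (immediate, as $\mathds{1}_{\frac12 K}$ is nonnegative and integrable) and the verification that the extracted $\vec{w}_i$ are genuinely distinct lattice points in $K$. The one conceptual step worth emphasizing in the writeup is the identity $\tfrac12 K - \tfrac12 K = K$, which is what converts ``many points of $\tfrac12 K$ in a single coset'' into ``many lattice points in $K$.''
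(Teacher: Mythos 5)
Your proof is correct, but note that the paper does not prove this statement at all: Theorem~\ref{thm:minkowski} is imported as a known classical result, cited to van der Corput~\cite{vanVerallgemeinerungMordellschen36}, so there is no internal proof to compare against. What you have written is the standard Blichfeldt averaging argument, and each step checks out: the unfolding $\int_{\R^n/\lat} f = 2^{-n}\vol(K)$ is justified by nonnegativity (Tonelli), the passage from the average to a point $\vec{x}_0$ with $f(\vec{x}_0) \geq \ceil{2^{-n}\vol(K)/\det(\lat)}$ is valid precisely because $f$ is integer-valued (worth stating explicitly in a writeup, since for a general real-valued $f$ one only gets the average without the ceiling), and the identity $\tfrac12 K - \tfrac12 K = K$ for origin-symmetric convex $K$ correctly converts the $m$ coset representatives into $m$ distinct lattice points in $\sqrt{k}B_2^n$, one of which is $\vec{0}$. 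Two small remarks. First, your argument implicitly requires $\lat$ to be full rank so that $\R^n/\lat$ has finite volume $\det(\lat)$; the paper makes the same implicit assumption, so this is consistent, but it deserves a word. Second, the paper's footnote records that the actual Blichfeldt--van der Corput result is slightly stronger, namely $\numberinball{k}{\lat} \geq 2\floor{2^{-n}\vol(\sqrt{k}B_2^n)} + 1$: your difference-set step anchors everything at $\vec{v}_1$ and so does not produce symmetric pairs, whereas van der Corput's refinement orders the points $\vec{x}_0 - \vec{v}_i$ (say, by a linear functional) and takes differences from an extreme representative, yielding $\pm$ pairs of distinct nonzero lattice points and hence the $2\floor{\cdot}+1$ count. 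Your weaker version fully suffices for the inequality as stated in the theorem, so there is no gap, only a loss of the factor-of-two refinement that the paper itself does not use.
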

		
		One might hope to obtain a partial converse to Theorem~\ref{thm:minkowski}  by proving that lattices with determinant larger than one cannot have too many short points. However, this is easily seen to be false. For example, consider the lattice $\lat \subset \R^2$ spanned by the vectors $(1/T, 0)$ and $(0,T^2)$ for arbitrarily large $T \gg 1$, which has arbitrarily large determinant $\det(\lat) = T$ but arbitrarily many short points $\numberinball{k}{\lat} \geq 1 + \floor{2\sqrt{k} T}$. The issue here is that while the lattice itself has large determinant, it has a sublattice (spanned by $(1/T,0)$) that has small determinant.
		
 Dadush conjectured that this is in some sense the only obstruction~\cite{DadPersonalCommunication12}. Specifically, he conjectured a precise upper bound on $\numberinball{k}{\lat}$ for any lattice $\lat \subset \R^n$ such that $\det(\lat') \geq 1$ for all sublattices $\lat' \subseteq \lat$. Such a ``reverse Minkowski theorem'' was later shown to hold~\cite{RSReverseMinkowski17}, as in the below theorem. (See~\cite{RSReverseMinkowski17} for a slightly more refined statement.)
		
		\begin{theorem}[Reverse Minkowski theorem~\cite{RSReverseMinkowski17}]
			\label{thm:RM}
			For any lattice $\lat \subset \R^n$ such that $\det(\lat') \geq 1$ for all sublattices $\lat' \subseteq \lat$ and any $k > 0$,
			\begin{align}
				\numberinball{k}{\lat} \leq 2 \exp(\tau k)
				\; ,  \label{eq:revminkmain}
			\end{align}
			where $\tau := C\log^2(2n)$ for some explicit constant $C > 0$.
		\end{theorem}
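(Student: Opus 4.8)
The plan is to pass from the point-counting quantity to the lattice theta function and prove the bound there. Writing $\rho(\lat) := \sum_{\vec{y} \in \lat} \exp(-\tau \|\vec{y}\|^2)$, every point with $\|\vec{y}\|^2 \leq k$ contributes at least $\exp(-\tau k)$ to this sum, so $\numberinball{k}{\lat} \leq \exp(\tau k)\,\rho(\lat)$. Hence the entire theorem reduces to showing that every lattice $\lat \subset \R^n$ with $\det(\lat') \geq 1$ for all sublattices $\lat' \subseteq \lat$ (a \emph{stable} lattice) satisfies $\rho(\lat) \leq 2$ for the choice $\tau = C\log^2(2n)$. Equivalently, at the fine Gaussian scale $1/\sqrt{\tau} \asymp 1/\log(2n)$, the mass of a stable lattice is dominated by the single point $\vec{0}$.

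To bound $\rho(\lat)$ I would argue in the contrapositive form \emph{``large mass forces a dense sublattice.''} Suppose $\rho(\lat)$ exceeds a constant strictly larger than $2$; the goal is to exhibit a sublattice $\lat' \subseteq \lat$ with $\det(\lat') < 1$, contradicting stability. The intuition is that surplus Gaussian mass means the discrete Gaussian on $\lat$ places nontrivial weight on many short nonzero vectors, and these cannot be spread across all $n$ directions without some low-dimensional subspace capturing a lattice that is too dense. Making this quantitative is the heart of the matter: one must show that if $\rho(\lat)$ exceeds the target, then there is a proper primitive sublattice of determinant below $1$.

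The natural way to organize the quantitative argument is by induction on the rank using the lattice analogue of the Harder--Narasimhan filtration, built from the sublattices of minimal determinant; stability is exactly the statement that the densest such sublattice already has determinant at least $1$. I would pose the extremal problem of maximizing $\rho(\lat)$ over all stable lattices of rank at most $n$ and analyze a maximizer. Compactness of the space of stable lattices modulo rotation (via reduction theory) guarantees the supremum is attained, and the first-order optimality conditions, together with the active stability constraints (sublattices $\lat'$ achieving $\det(\lat') = 1$), pin down enough structure of the maximizer to drive the induction. The scale $\tau \asymp \log^2(2n)$ should emerge because the filtration has depth $\asymp \log n$ while each level costs a further factor $\asymp \log n$ in the Gaussian parameter needed to re-smooth.

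The main obstacle is precisely the implication \emph{``large Gaussian mass $\Rightarrow$ dense sublattice,''} which is genuinely self-referential, since controlling where the lattice points sit is what the theta function measures in the first place. The difficulty is quantitative and twofold. First, one must extract from a mass bound a \emph{single} sublattice that violates stability, rather than a diffuse statement about many short vectors; this appears to require an averaging or pigeonhole step that isolates a subspace carrying disproportionate Gaussian weight and then controls the determinant of the lattice it cuts out. Second, one must force the dimension dependence to come out as $\log^2(2n)$ and no worse, which demands that the filtration bookkeeping be tight at every level. I expect the analysis of the extremal lattice---showing that its optimality conditions force either a balanced structure or a clean split along the canonical filtration---to be where essentially all the work lies.
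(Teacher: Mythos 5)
Note first that the paper does not prove Theorem~\ref{thm:RM} at all: it is quoted from~\cite{RSReverseMinkowski17}, so the relevant comparison is with the proof in that reference. Your opening reduction is correct and matches its starting point: writing $\rho_\tau(\lat) := \sum_{\vec{y} \in \lat} \exp(-\tau \|\vec{y}\|^2)$, every point with $\|\vec{y}\|^2 \leq k$ contributes at least $\exp(-\tau k)$, so it suffices to prove $\rho_\tau(\lat) \leq 2$ for stable lattices at $\tau = C\log^2(2n)$. The architecture you then sketch --- reduce to stable lattices via the Harder--Narasimhan (canonical) filtration, pass to a maximizer of the Gaussian mass over the compact space of stable lattices, and split-and-induct when a stability constraint $\det(\lat') = 1$ is active --- is indeed the architecture of~\cite{RSReverseMinkowski17}. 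But what you have written is a plan, not a proof, and the gap sits exactly where you place ``the heart of the matter.''

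Concretely, two things are missing. First, the mechanism you propose for the extremal analysis --- first-order optimality conditions plus active constraints --- is insufficient and, as stated, would fail. At an interior critical point (a stable lattice with no proper sublattice of determinant exactly $1$), stationarity of $\rho_\tau$ yields only an identity of the form $\sum_{\vec{y} \in \lat} e^{-\tau \|\vec{y}\|^2} \vec{y} \vec{y}^T \propto I_n$, which by itself does not bound the mass. What \cite{RSReverseMinkowski17} actually proves is a \emph{second-order} statement: the Gaussian mass is strictly subharmonic along the geodesic directions of the symmetric space $\mathrm{SL}_n(\R)/\mathrm{SO}_n(\R)$, so it admits no interior local maximum and the maximizer is forced onto the stability boundary; this convexity property is the analytic core, and nothing in your sketch supplies a substitute. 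Second, the boundary/induction step needs the factorization inequality $\rho_\tau(\lat) \leq \rho_\tau(\lat') \cdot \rho_\tau(\pi(\lat))$, where $\pi$ denotes projection onto the orthogonal complement of $\spn \lat'$, together with the fact that both factors are stable when $\det(\lat') = 1$, and then careful tracking of how the Gaussian parameter must grow along the recursion --- it is this bookkeeping, not the ``depth $\log n$ times cost $\log n$'' heuristic, that produces $\tau \asymp \log^2(2n)$, and whether it can be tightened to $\log(2n)$ is precisely the open question this paper highlights (and resolves only for integral lattices). A minor further point: compactness via reduction theory holds for stable lattices normalized to determinant one (Mahler's criterion applies since the shortest nonzero vector has length at least $1$), but the supremum over all lattices merely satisfying $\det(\lat') \geq 1$ for all $\lat' \subseteq \lat$ need not be attained without the normalization and filtration reduction you would still have to carry out.
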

		
		We refer the reader to~\cite{DRStrongReverse16,RSReverseMinkowski17} for more discussion of such inequalities and their applications.
     A major open question is whether one can take $\tau = C \log(2n)$ in Theorem~\ref{thm:RM}. (Using the example of $\Z^n$, it is easy to see that one cannot take $\tau$ to be any smaller.) 
     Notice that Theorem~\ref{thm:unimodular_RM} shows that this does in fact hold in the special case of integral lattices. 
     In particular, notice that integral lattices do in fact satisfy the condition that $\det(\lat') \geq 1$ for all sublattices $\lat' \subseteq \lat$, since (1) the determinant of an integral lattice must be the square root of a positive integer; and (2) a sublattice of an integral lattice is also integral. So, Theorem~\ref{thm:unimodular_RM} and Corollary~\ref{cor:gaussian} can be viewed as resolving this open question in the important special case of integral lattices. See 
     Section~\ref{sec:open} for more discussion of open questions.

\paragraph{Acknowledgments.} We thank Igor Balla, Surendra Ghentiyala, Stephen D. Miller, and Akshay Venkatesh for helpful discussions.

		\section{Proof of Theorem~\ref{thm:unimodular_RM}}
        \label{sec:proof_of_main_theorem}

		The proof of Theorem~\ref{thm:unimodular_RM} is very different from the (analytic and topological) proof of Theorem~\ref{thm:RM} from~\cite{RSReverseMinkowski17}, and surprisingly simple. The main (and essentially only) tool that we need is a theorem of Delsarte, Goethals, and Seidel~\cite{DGSBoundsSystems91} concerning the maximal number of lines whose pairwise angles are restricted to a few values. (See~\cite{KooNoteAbsolute76} for a short proof and~\cite{DGSSphericalCodes91} for more about such configurations of vectors.)
		
		\begin{theorem}[\cite{DGSBoundsSystems91}]
			\label{thm:few_angles}
			For any distinct unit vectors $\vec{v}_1,\ldots, \vec{v}_m \in \R^n$ satisfying
			\[
			\forall i,j,~
			|\inner{\vec{v}_i, \vec{v}_j}| \in A \cup \{0,1\}
			\; 
			\]
			for some finite set $A \subset (0,1)$,
			we must have
			\[
			m \leq 2  \binom{n+2|A|}{2|A| + 1}
			\; .
			\]			
		\end{theorem}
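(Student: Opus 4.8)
The plan is to prove this by the polynomial (linear-algebra) method, essentially Koornwinder's argument. First I would reduce from counting vectors to counting lines. Since the hypothesis permits $|\inner{\vec{v}_i,\vec{v}_j}| = 1$, the only coincidences among the $\vec{v}_i$ beyond equality are antipodal pairs, so the $m$ vectors determine at most $p$ distinct lines with $m \leq 2p$; it therefore suffices to show $p \leq \binom{n+2|A|}{2|A|+1}$. Picking one unit representative $\vec{w}_1,\ldots,\vec{w}_p$ per line, we have $\inner{\vec{w}_i,\vec{w}_i} = 1$ while $|\inner{\vec{w}_i,\vec{w}_j}| \in A \cup \{0\}$ for $i \neq j$ (now strictly below $1$, since distinct lines cannot be antipodal).

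Writing $A = \{\alpha_1,\ldots,\alpha_s\}$ with $s = |A|$, I would attach to each representative the polynomial $F_i(\vec{x}) := \inner{\vec{w}_i,\vec{x}} \prod_{\ell=1}^{s}\bigl(\inner{\vec{w}_i,\vec{x}}^2 - \alpha_\ell^2\bigr)$ on $\R^n$. Evaluating on the representatives produces a near-diagonal pattern: $F_i(\vec{w}_i) = \prod_\ell (1-\alpha_\ell^2) \neq 0$ because every $\alpha_\ell \in (0,1)$, whereas for $i \neq j$ either $\inner{\vec{w}_i,\vec{w}_j} = 0$ (killed by the leading factor) or $\inner{\vec{w}_i,\vec{w}_j}^2 = \alpha_\ell^2$ for some $\ell$ (killed by the product), so $F_i(\vec{w}_j) = 0$. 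Testing a vanishing linear combination $\sum_i c_i F_i$ at each $\vec{w}_j$ then forces $c_j = 0$, so the $F_i$ are linearly independent, and $p$ is bounded by the dimension of any space of functions on $S^{n-1}$ that contains all of them.

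The last step is the dimension count, and extracting the clean single binomial $\binom{n+2s}{2s+1}$ (rather than a sum over degrees) is the one place requiring care. Each $F_i$ is a sum of homogeneous pieces of odd degrees $1,3,\ldots,2s+1$. Since the $F_i$ are only ever evaluated on the unit sphere, I would reduce modulo $\length{\vec{x}}^2 - 1$: a homogeneous piece of degree $2j+1$ agrees on $S^{n-1}$ with its product by $(\length{\vec{x}}^2)^{\,s-j}$, which is homogeneous of degree $2s+1$. Hence every $F_i$, restricted to $S^{n-1}$, lies in the image of the space of homogeneous degree-$(2s+1)$ polynomials in $n$ variables, whose dimension is $\binom{n+2s}{2s+1}$. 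Combined with the linear independence above, this gives $p \leq \binom{n+2s}{2s+1}$ and therefore $m \leq 2\binom{n+2|A|}{2|A|+1}$.

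The main obstacle is precisely this degree-collapsing step. Without the spherical reduction one only obtains the weaker bound $\sum_{j=0}^{s}\binom{n+2j}{2j+1}$, the total dimension of the individual odd-degree layers; the improvement to a single binomial hinges on observing that, as \emph{functions on the sphere}, the distinct layers collapse into homogeneous degree-$(2s+1)$ polynomials. One must also confirm that passing to the sphere does not destroy the linear independence established earlier, which it does not, since the test points $\vec{w}_j$ already lie on $S^{n-1}$.
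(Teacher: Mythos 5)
Your proposal is correct: the reduction to lines, the polynomials $\inner{\vec{w}_i,\vec{x}}\prod_\ell(\inner{\vec{w}_i,\vec{x}}^2-\alpha_\ell^2)$, the diagonal evaluation argument, and the homogenization trick of multiplying lower-degree pieces by powers of $\length{\vec{x}}^2$ on the sphere together give exactly the bound $2\binom{n+2|A|}{2|A|+1}$. The paper does not prove this theorem itself---it imports it from Delsarte--Goethals--Seidel and points to Koornwinder~\cite{KooNoteAbsolute76} for a short proof---and your argument is precisely that short proof, so there is nothing to compare beyond noting that you have supplied the proof the paper omits.
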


		To see why Theorem~\ref{thm:few_angles} is relevant, first notice that the case $A = \emptyset$ exactly captures our earlier observation that $\Z^n$ maximizes the number of vectors of length one.  More generally, consider the $m_k$ distinct vectors $\vec{y}_1,\ldots, \vec{y}_{m_k} \in \lat$ with squared length equal to some positive integer $k$, i.e., $\|\vec{y}_i\|^2 = k$. (All lattice vectors have integral squared length by assumption, since $\|\vec{y}\|^2 = \inner{\vec{y}, \vec{y}}$ and $\lat$ is integral.) 
		Clearly, $\inner{\vec{y}_i, \vec{y}_j}$ must be an integer, and by Cauchy-Schwarz, we see that the inner product must lie in a small set of integers.
		In particular,
		\[
		|\inner{\vec{y}_i, \vec{y}_j}| \in \{0,1,\ldots,k-1,k\}\;. 
		\] 
		Taking $\vec{v}_i := \vec{y}_i/\sqrt{k}$, we see that $\vec{v}_1,\ldots, \vec{v}_{m_k} \in \R^n$ are distinct unit vectors with
		\[
			|\inner{\vec{v}_i, \vec{v}_j}| \in A \cup \{0,1\}
			\; ,
		\]
		where $A := \{1/k, 2/k, \ldots, (k-1)/k\}$. Applying Theorem~\ref{thm:few_angles} and noticing that $|A| = k-1$ yields
		\begin{equation}
		\label{eq:bound_on_mk}
		m_k \leq 2 \cdot \binom{n+2|A|}{2|A| + 1} = 2\cdot  \binom{n + 2k-2}{2k-1}
		\; ,
		\end{equation}
		as claimed in Eq.~\eqref{eq:tighterbound_sphere} of Theorem~\ref{thm:unimodular_RM}.
		
		Next, notice that Eq.~\eqref{eq:tighterbound_sphere} implies Eq.~\eqref{eq:bound_ball}, because
		\[
		    1 + 2\sum_{j=1}^{k} \binom{n+2j-2}{2j-1} \leq 1 + 2\sum_{i=1}^{2k-1} \binom{n+i-1}{i} = 2 \binom{n+2k-1}{2k-1}-1 
		    \; .
		\]

For Eq.~\eqref{eq:constkbound}, we use a result from~\cite{keevashBoundsSphericalCodes2016,BDKSEquiangularLinesSpherical2018} improving upon Theorem~\ref{thm:few_angles} for sufficiently large $n$ in the case when $|A|$ is constant and $A \subset (\beta, 1)$ for some constant $\beta  > 0$. Here we present a special case of their main result that is sufficient for our purposes.

\begin{theorem}[{Special case of \cite[Theorem 1.4]{BDKSEquiangularLinesSpherical2018}}]
\label{thm:few_angles_2}
    For any constant positive integer $k$, if $\vec{v}_1,\ldots, \vec{v}_m \in \R^n$ are distinct unit vectors satisfying
    \[
       \forall i,j,~
       |\langle \vec{v}_i, \vec{v}_j \rangle| \in \{0,1/k,2/k,\ldots,(k-1)/k,1\}
        \; ,
    \]
    we must have
    \[
        m \leq 2^{k} (k-1)! \cdot n^k + o(n^k)
        \; .
    \]
\end{theorem}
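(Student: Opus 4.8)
The plan is to take Theorem~\ref{thm:few_angles} as the backbone and then improve its exponent. Applied to our angle set $A = \{1/k, 2/k, \ldots, (k-1)/k\}$, Theorem~\ref{thm:few_angles} already gives $m = O(n^{2k-1})$ for free, so the entire task is to lower the exponent from $2k-1$ to $k$ and to pin down the leading constant $2^k(k-1)!$. The only feature not yet exploited is that every \emph{nonzero} value $|\inner{\vec{v}_i,\vec{v}_j}|$ is bounded away from $0$: the smallest is $1/k$, a constant. I would first record why no soft reduction can succeed. The natural such reduction passes to the rank-one projections $\vec{v}_i\vec{v}_i^{T}$, which are unit Frobenius-norm vectors in the space of symmetric matrices (dimension $N = \binom{n+1}{2} = \Theta(n^2)$) with pairwise inner products $\inner{\vec{v}_i,\vec{v}_j}^2 \in \{0, 1/k^2, \ldots, (k-1)^2/k^2\}$, a set of $k$ values. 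The generic spherical-code bound for $k$ inner-product values gives only $O(N^{k}) = O(n^{2k})$, which is even worse; this confirms that the improvement must genuinely use the avoid-zero structure of the angles rather than any dimension count on the Veronese image.

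The improvement itself I would organize around the Gram matrix, following \cite{keevashBoundsSphericalCodes2016,BDKSEquiangularLinesSpherical2018}. Write $G = I + \tfrac1k S$, where $S$ is the symmetric integer matrix with zero diagonal and $S_{ij} = k\inner{\vec{v}_i,\vec{v}_j} \in \{-(k-1),\ldots,k-1\}$. Positive semidefiniteness of $G$ forces $\lambda_{\min}(S) \geq -k$, and $\rank(G) \leq n$ forces the eigenvalue $-k$ (if present) to have multiplicity at least $m - n$. Thus $S$ is a symmetric integer matrix with \emph{bounded entries} and least eigenvalue $-k$ of very high multiplicity, which is precisely the spectral object controlled by the modern equiangular-lines machinery. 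The mechanism is, roughly: a Ramsey-type step extracts large subfamilies with controlled sign patterns among the non-orthogonal pairs, the boundedness of $\lambda_{\min}(S)$ rigidly constrains these subfamilies, and the zero entries of $S$ (orthogonality) are what organize the whole configuration into an approximate orthogonal ``product'' structure. After discarding a negligible $o(n^k)$ set of exceptional lines, each surviving line can be charged to a tuple of at most $k$ mutually orthogonal directions — mirroring the $\Z^n$ extremal example, where a vector of squared length $k$ is determined by a support of size $\leq k$ together with signs. Counting such tuples yields $O(n^k)$, and tracking the multiplicities of the charging map is what produces the factor $2^k(k-1)!$.

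The hard part will be the final step: upgrading the qualitative ``decomposes into bounded orthogonal frames'' picture into the \emph{sharp} exponent $k$ with the \emph{exact} constant $2^k(k-1)!$, rather than some larger $C_k\, n^k$. This is where I would invoke \cite[Theorem~1.4]{BDKSEquiangularLinesSpherical2018} as a black box, after checking that our angle set meets its hypotheses (it does, with room to spare, since all nonzero angles lie in $[1/k,(k-1)/k] \subset (0,1)$ for fixed $k$) and reading off the constant. I expect the genuinely delicate ingredient to be the uniform-in-$n$ control of the least eigenvalue of $S$ and its interaction with the orthogonal block structure; the argument is inherently asymptotic, which is exactly why the hypothesis that $k$ is a fixed constant and the error term $o(n^k)$ are both unavoidable. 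The polynomial method on its own will not get there, so the spectral analysis is the irreplaceable core of the proof.
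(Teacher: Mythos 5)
Your proposal ends exactly where the paper does: Theorem~\ref{thm:few_angles_2} is not proved in the paper at all, but is simply quoted as a special case of \cite[Theorem 1.4]{BDKSEquiangularLinesSpherical2018}, and you likewise invoke that theorem as a black box after verifying that the angle set $\{0,1/k,\dots,(k-1)/k,1\}$ meets its hypotheses. Your surrounding sketch of the Gram-matrix/least-eigenvalue machinery is a fair description of how that cited result is actually proved, but it is not load-bearing, so the two treatments coincide.
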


This immediately implies that $\numberonsphere{k}{\lat}$ is bounded from above by the right-hand side of Eq.~\eqref{eq:constkbound}. 
The same bound also applies to $\numberinball{k}{\lat} = \sum_{i=0}^{k} \numberonsphere{i}{\lat}$ by noticing that $\numberonsphere{i}{\lat}=o(n^k)$ for $i < k$.

		\section{Proof of Corollary~\ref{cor:gaussian}}
		
		As in the previous section, we write $m_k$ for the number of lattice vectors of length $\sqrt{k}$. We have
		\[
		    \sum_{\vec{y} \in \lat} \exp(-\tau^* \|\vec{y}\|^2) =  \sum_{k=0}^\infty \exp(-\tau^* k) m_k = 1 + \sum_{k=1}^\infty (2n)^{-2k} m_k
		    \; .
		\]
		    Applying Eq.~\eqref{eq:bound_on_mk}, we see that the above is bounded by 
		    \[
		        1+2\sum_{k=1}^\infty (2n)^{-2k} \binom{n+2k-2}{2k-1} = 1+ \frac{1}{2n} \cdot \big( (1-1/(2n))^{-n} -(1+1/(2n))^{-n}\big) \leq 1+C/n
          \; ,
		    \]
		    where the equality follows from the negative binomial series identity
		    \[
		       \sum_{k=0}^\infty x^k \binom{n+k-1}{k} = (1-x)^{-n}
		        \; .
		    \]

\section{\texorpdfstring{Tight bounds for $k=2$}{Tight bounds for k=2}}
\label{sec:small_radii}

In this section, we
provide tight bounds for the case $k=2$.
To that end, we need the notion of a \emph{root system}.
 A root system is a finite set $\Phi \subset \R^n$ of non-zero vectors $\vec{y}_i \in \R^n$ such that (1) $\vec{y}_i- 2 \langle \vec{y}_i, \vec{y}_j \rangle \vec{y}_j/\|\vec{y}_j\|^2$ is in $\Phi$ for all $i,j$; (2) $2\langle \vec{y}_i, \vec{y}_j \rangle/ \|\vec{y}_j\|^2$ is an integer for all $i,j$; and (3) if $\vec{y}_i = \alpha\vec{y}_j$ for some scalar $\alpha$, then $\alpha = \pm 1$. It is easy to see that for any integral lattice, the set of vectors of length in $\{1,\sqrt{2}\}$ is a root system. There is a classification of all root systems, and from this classification we immediately derive the following theorem, which is tight for all $n$. See~\cite[Chapter 7]{ZhiIntroductionLatticeGeometry2016} for more discussion of root systems in the context of lattices. (See also~\cite{RootSystem2022}.) From this classification, we obtain the following.

\begin{theorem}
    \label{thm:root_system_thing}
    For any integral lattice $\lat \subset \R^n$, $\numberinball{2}{\lat} \le f(n)+1$ where 
    \begin{equation*}
        f(n) := \begin{cases}
            126 &n = 7\\
            240 + 2(n-8)^2 &8 \leq n \leq 11\\
            2n^2  &\text{otherwise}.
        \end{cases}
    \end{equation*}
Moreover, this bound is tight for all $n$. 
\end{theorem}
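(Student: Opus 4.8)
The plan is to translate the point count into a question about root systems and then invoke their classification. First I would write $\numberinball{2}{\lat} = 1 + |\Phi|$, where $\Phi$ is the set of lattice vectors of squared length $1$ or $2$ and the $1$ counts $\vec{0}$; as noted above, $\Phi$ is a root system. The extra feature coming from integrality is that the Gram matrix of $\Phi$ has integer entries. In particular, any two vectors of squared length $1$ have integer inner product lying in $[-1,1]$, so—exactly as in the $k=1$ argument in the introduction—the squared-length-$1$ vectors of $\lat$ are pairwise orthogonal up to sign. This is the key constraint that will rule out several root systems below.

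Next I would invoke the classification: $\Phi$ is an orthogonal direct sum of irreducible root systems, each of type $A_m$, $B_m$, $C_m$, $D_m$, $E_6$, $E_7$, $E_8$, $F_4$, or $G_2$. Since all roots have squared length in $\{0\}\cup\{1,2\}$, the ratio of squared root-lengths within a component is at most $2$, which already excludes $G_2$ (ratio $3$). For the doubly-laced types $B_m$, $C_m$, $F_4$, the short roots have squared length $1$ and must therefore be pairwise orthogonal up to sign by the previous paragraph; this holds for $B_m$, whose short roots $\pm\vec{e}_i$ are orthogonal, but fails for $C_m$ with $m \ge 2$ and for $F_4$, whose squared-length-$1$ roots are not pairwise orthogonal. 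Hence $C_{m\ge 2}$, $F_4$, and $G_2$ are all excluded, and each component is one of $A_m$ ($m(m+1)$ roots), $B_m$ ($2m^2$), $D_m$ ($2m(m-1)$), $E_6$ ($72$), $E_7$ ($126$), or $E_8$ ($240$). Observe that $|\Phi_i| \le 2m_i^2$ for every component of rank $m_i$ except $E_7$ (where $126 = 2\cdot 7^2 + 28$) and $E_8$ (where $240 = 2\cdot 8^2 + 112$). I would flag the exclusion of $F_4$ as the subtle point: without it one could fit $48$ roots into rank $4$, which would falsify the theorem, so the integrality input is essential and not merely cosmetic.

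It then remains to maximize $|\Phi| = \sum_i |\Phi_i|$ over all admissible decompositions of rank at most $n$. Writing $a$ and $b$ for the number of $E_7$ and $E_8$ components and bounding the remaining components using superadditivity of $m \mapsto 2m^2$, I get
\[
    |\Phi| \le 126\,a + 240\,b + 2(n - 7a - 8b)^2
    \;,
\]
over non-negative integers with $7a + 8b \le n$, and the task is to show this maximum equals $f(n)$. I would finish with a short case analysis: the term $2(n - 7a - 8b)^2$ is convex and decreasing in the special rank consumed, so the optimum uses very few exceptional blocks, and one checks that (i) no exceptional block gives $2n^2$; (ii) a single $E_8$ gives $240 + 2(n-8)^2$, which beats $2n^2$ exactly when $n \le 11$; (iii) a single $E_7$ wins only at $n=7$ (where $E_8$ is unavailable), giving $126$; and (iv) any configuration with two or more exceptional blocks consumes rank at least $14$ yet never beats $2n^2$ for such $n$. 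I expect this bookkeeping—simultaneously ruling out every combination of exceptional components—to be the main (though entirely elementary) obstacle.

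Finally, for tightness I would exhibit an integral lattice attaining $f(n)+1$ in each regime. The lattice $\Z^n$ has exactly $2n$ vectors $\pm\vec{e}_i$ of squared length $1$ and $2n(n-1)$ vectors $\pm\vec{e}_i \pm \vec{e}_j$ of squared length $2$, so $\numberinball{2}{\Z^n} = 1 + 2n^2$, matching the ``otherwise'' case; the root lattice $E_7$ gives $1 + 126$, matching $n=7$; and $E_8 \oplus \Z^{n-8}$ gives $1 + 240 + 2(n-8)^2$, matching $8 \le n \le 11$, where orthogonality of the two summands forces any vector with nonzero components in both to have squared length at least $3$ and hence contribute nothing. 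Matching these constructions against the three cases of $f$ completes the proof.
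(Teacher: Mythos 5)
Your proposal is correct and follows essentially the same route as the paper: identify the norm-$\le 2$ vectors with a root system, invoke the classification, exclude $G_2$ and $F_4$ via non-integral inner products, bound each irreducible component by $2n_i^2$ except $E_7$ and $E_8$, and exhibit the same tight examples $E_7$, $E_8 \oplus \Z^{n-8}$, and $\Z^n$. The only real difference is in the final optimization over decompositions, where the paper avoids your case analysis on the number of exceptional blocks by checking that $f$ dominates the per-component bound and is superadditive; both arguments work.
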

\begin{proof}
    A root system $\Phi$ is called \emph{reducible} if $\Phi$ can be written as a non-trivial disjoint union $\Phi = \Phi_1 \cup \cdots \cup \Phi_\ell$ where the sets $\Phi_i$ are pairwise orthogonal. Notice that in this case $\Phi_i$ is itself a root system. All root systems $\Phi \subset \R^n$ can be written as the disjoint union of \emph{irreducible} root systems $\Phi_1 \cup \cdots \cup \Phi_\ell$, where  $n \geq n_1 + \cdots + n_\ell$ for $n_i := \dim \spn \Phi_i$. The non-zero vectors with length at most $\sqrt{2}$ in an integral lattice form such a root system $\Phi$, and after accounting for zero, we have $\numberinball{2}{\lat} = 1+|\Phi| = 1 + \sum_i |\Phi_i|$.

    The irreducible root systems $\Phi_i$ are classified as follows. There are four infinite families of irreducible root systems, each of which satisfies $|\Phi_i| \leq 2n_i^2$. Then, there are five exceptional irreducible root systems: $G_2$, $F_4$, $E_6$, $E_7$, and $E_8$, where in each case the subscript represents the dimension of the (linear span of the) system. $G_2$ and $F_4$ do not arise as a subset of vectors of length $\{1,\sqrt{2}\}$ of an integral lattice, so they are not relevant to us. (In particular, when $G_2$ and $F_4$ are scaled so that the vectors have length at most $\sqrt{2}$, some of the resulting pairwise inner products are not integers.)
    $|E_6| = 72 = 2 \cdot 6^2$, so if $\Phi_i = E_6$, we still have the bound $|\Phi_i| \leq 2n_i^2$. The remaining two irreducible root systems have $|E_7| = 126 > 2 \cdot 7^2$ and $|E_8| = 240 > 2 \cdot 8^2$. Therefore, $|\Phi_i| \leq g(n_i)$, where $g(n_i) = 2n_i^2$ if $n_i \notin \{7,8\}$, $g(7) = 126$, and $g(8) = 240$.

    From the above, we have that
    \[\numberinball{2}{\lat}-1 \leq \max_{n_1+ \cdots + n_\ell = n} g(n_1) + \cdots + g(n_\ell) \; .
    \] 
    The result follows by noting that (1) $f(n) \geq g(n)$ for all $n$; and (2) $f(n)$ is superadditive, i.e., $f(n_1 + n_2) \geq f(n_1) + f(n_2)$ for all $n_1,n_2$ (which is trivial when $n_1,n_2 \notin \{7,\ldots, 11\}$ and can be verified by checking finitely many cases otherwise). Indeed, it follows that
    \[
        \max_{n_1 + \cdots + n_\ell=n} g(n_1) + \cdots + g(n_\ell) \leq \max_{n_1 + \cdots + n_\ell=n} f(n_1) + \cdots + f(n_\ell) \leq f(n)
        \; ,
    \]
    as needed.    

Tightness follows from the lattice $E_7$ ($n=7$), $E_8 \oplus \Z^{n-8}$ ($8 \le n \le 11$), or $\Z^n$ (otherwise). 
\end{proof}

\section{Open questions}
\label{sec:open}

We leave open the question of whether Theorem~\ref{thm:unimodular_RM} can be made tighter. In particular, Eq.~\eqref{eq:Zn_lower_bound} shows that Theorem~\ref{thm:unimodular_RM} is essentially tight up to a factor of two in both $k$ and $n$, and it is natural to ask whether this factor of two can be saved. Indeed, Eq.~\eqref{eq:constkbound} and Theorem~\ref{thm:root_system_thing} show how to save this factor of two for constant $k$, and it would be interesting to achieve something similar for all $k$. 
Such a strong bound would follow if one could show that $\Z^n$ maximizes the Gaussian mass, namely, that 
\begin{equation}
    \label{eq:tight_all_params}
    \sum_{\vec{y} \in \lat} \exp(-\tau \|\vec{y}\|^2) \leq \sum_{\vec{z} \in \Z^n} \exp(-\tau \|\vec{z}\|^2)
\end{equation}
for all $\tau > 0$ and any integral lattice $\lat$. If true, such an inequality would be very interesting on its own.  

We also ask whether it is possible to improve Theorem~\ref{thm:RM}, which achieves a weaker point-counting bound than our main theorem but applies to the more general class of \emph{substable} lattices, i.e., lattices that have no dense sublattices. One possible approach is to derive such a result from Theorem~\ref{thm:unimodular_RM} by a ``net'' argument.  
Specifically, show that the set of integral lattices is sufficiently dense in the set of substable lattices (under some metric) so that every substable lattice is not too far from an integral lattice, and argue that the strong bound of Theorem~\ref{thm:unimodular_RM} also applies to the nearby substable lattice.

  \appendix

  \section{Proof of \texorpdfstring{Eq.~\eqref{eq:Zn_lower_bound}}{a simple lower bound for integer points on a sphere}}
  \label{app:Zn_lower_bound}

  We will require the following consequence of Jacobi's four-square theorem, Jacobi's six-square theorem, and Jacobi's eight-square theorem. We note that the inequalities here are rather loose, as we have simply chosen bounds that lead to the simple expression in Eq.~\eqref{eq:Zn_lower_bound}.

  \begin{claim}
    \label{clm:Jacobi_proved_a_lot_of_stuff}
        For any positive integer $k$, $\numberonsphere{k}{\Z^6} \geq \binom{k+2}{2}$ and $\numberonsphere{k}{\Z^8} \geq \binom{k+3}{3}$. Furthermore, if $k$ is odd, then $\numberonsphere{k}{\Z^4} \geq 2k+3$.
  \end{claim}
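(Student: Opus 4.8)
The plan is to derive all three inequalities from Jacobi's exact formulas for the number of representations of an integer as a sum of $4$, $6$, and $8$ squares, writing $r_n(k) := \numberonsphere{k}{\Z^n}$. In each case the relevant formula is a twisted divisor sum whose dominant contribution comes from the divisor $d = k$ and already exceeds the target binomial coefficient by a large constant factor; the real content is to check that the remaining terms, which carry a character twist or alternating sign, cannot cancel more than a controlled fraction of this leading term. For the four-square bound I would use $r_4(k) = 8\sum_{d \mid k,\, 4 \nmid d} d$: when $k$ is odd every divisor is odd, so this equals $8\sigma(k) \ge 8k$ (already from the single divisor $d = k$), and $8k \ge 2k+3$ holds for every positive integer $k$. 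This case is immediate and needs no case analysis.

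For six squares I would use $r_6(k) = 16\sum_{d\mid k}\chi(k/d)\,d^2 - 4\sum_{d\mid k}\chi(d)\,d^2$, where $\chi$ is the nontrivial character modulo $4$. Writing $k = 2^a u$ with $u$ odd, only divisors with $k/d$ odd survive in the first sum, which becomes $16\cdot 4^a\sum_{e\mid u}\chi(u/e)\,e^2$; its $e=u$ term contributes $16\cdot 4^a u^2$, while the remaining terms have absolute value at most $16\cdot 4^a(\sigma_2(u)-u^2)$ and the entire second sum contributes at most $4\sigma_2(u)$ in absolute value. Using the Euler-product bound $\sigma_2(u)/u^2 = \sum_{e \mid u} e^{-2} \le \prod_{p \ge 3}(1-p^{-2})^{-1} = \pi^2/8 < 1.234$, valid because $u$ is odd, I would obtain $2u^2 - \sigma_2(u) \ge 0.76\,u^2$ and hence $r_6(k) \ge c\,4^a u^2 = c\,k^2$ for an explicit constant $c$ near $7$, which comfortably dominates $\binom{k+2}{2} \le 3k^2$.

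The eight-square case is analogous and slightly easier since the formula $r_8(k) = 16\sum_{d \mid k}(-1)^{k+d}d^3$ involves only signs. For odd $k$ all signs are $+1$, so $r_8(k) = 16\sigma_3(k) \ge 16k^3$. For even $k = 2^a u$ the sign is $(-1)^d$; the even divisors contribute at least $k^3$ (from $d=k$), while the odd divisors contribute exactly $\sigma_3(u) \le \zeta(3)u^3 \le \zeta(3)k^3/8$ since $u \le k/2$, giving $r_8(k) \ge 16\bigl(1 - \zeta(3)/8\bigr)k^3 \ge 13\,k^3$, which dominates $\binom{k+3}{3} \le 4k^3$.

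The main obstacle is purely the even case of the six- and eight-square formulas, where the character and sign twists introduce genuine cancellation; the point that makes it go through is that the odd part $u$ is truly odd, so the divisor-power sums $\sigma_2(u)$ and $\sigma_3(u)$ admit Euler-product bounds strictly below $2u^2$ and $2u^3$ respectively, leaving the $d=k$ term firmly in control. Since each resulting constant (roughly $7$, $16$, and $13$) beats the leading constant of the relevant binomial ($\tfrac12$ for the quadratic and $\tfrac16$ for the cubic) by a wide margin, no separate treatment of small $k$ is needed beyond recording the crude polynomial bounds $\binom{k+2}{2}\le 3k^2$ and $\binom{k+3}{3}\le 4k^3$ for all $k \ge 1$.
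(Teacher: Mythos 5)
Your proposal is correct and takes essentially the same route as the paper: all three bounds are read off from Jacobi's exact four-, six-, and eight-square formulas by isolating the dominant divisor term and crudely controlling the twisted remainder. The only difference is bookkeeping in the error estimates (you use Euler-product bounds $\sigma_2(u)\le (\pi^2/8)u^2$ and $\sigma_3(u)\le \zeta(3)u^3$ on the odd part, while the paper sums $\sum d^{-2}$ over all integers and pairs each odd divisor with its double), and both yield comparable constants that comfortably dominate the binomial coefficients.
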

  \begin{proof}
      Jacobi's four-square theorem says that for odd $k$, $\numberonsphere{k}{\Z^4} = 8\sum_{d | k} d$. In particular, $\numberonsphere{k}{\Z^4} \geq 8k \geq 2k + 3$, as needed.

      Jacobi's six-square theorem says that 
      \[
        \numberonsphere{k}{\Z^6} = 4\sum_{d|k} (k/d)^2(4 \chi(d) - \chi(k/d))
        \; ,
      \]
      where $\chi(m) = 0$ if $m$ is even, $\chi(m) = 1$ if $m = 1 \bmod 4$, and $\chi(m) = -1$ if $m = 3 \bmod 4$. 
      Therefore,
      \[
        \numberonsphere{k}{\Z^6} \geq 12 k^2 - 4\sum_{\mystack{d \geq 2}{d \text{ even}}} (k/d)^2 - 20 \sum_{\mystack{d \geq 3}{d \text{ odd}}} (k/d)^2 = 12 k^2 - (\pi^2/6) k^2 -20 (\pi^2/8-1) k^2 \geq \binom{k+2}{2}
      \]
      where the equality follows by recalling that $\sum_{d \geq 1} 1/d^2 = \pi^2/6$ and the last inequality follows by simply expanding out the binomial coefficient.

      Finally, Jacobi's eight-square theorem says that 
      \[
        \numberonsphere{k}{\Z^8} = 16 \sum_{d | k} (-1)^{d+k} d^3
\; .
      \]
      For odd $k$, all terms in the sum are positive, and we immediately see that $\numberonsphere{k}{\Z^8} \geq 16k^3$. For even $k$, we note that every odd divisor $d$ is half of an even divisor, so that
      \[
            \numberonsphere{k}{\Z^8} = 16 \sum_{\mystack{d | k}{d \text{ even}}} d^3 - 16\sum_{\mystack{d | k}{d \text{ odd}}} d^3 \geq 16 \sum_{\mystack{d | k}{d \text{ even}}} d^3 - 16\sum_{\mystack{d | k}{d \text{ even}}} (d/2)^3 \geq 14k^3
            \; .
        \]
        In either case, we have
        \[
            \numberonsphere{k}{\Z^8} \geq 14k^3 \geq \binom{k+3}{3}
            \; ,
        \]
        where again the last inequality follows by simply expanding out the binomial coefficient.
   \end{proof}
  
    \begin{claim}
        For any integer $n \geq 6$ and positive integer $k$,
    \[
    \numberonsphere{k}{\Z^n} \geq \binom{\floor{n/2}+k-1}{k}
    \; .
    \]
    \end{claim}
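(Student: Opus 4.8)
The plan is to pass to generating functions and reduce everything to the three facts of Claim~\ref{clm:Jacobi_proved_a_lot_of_stuff}. The starting point is that splitting coordinates gives the convolution
\[
  \numberonsphere{k}{\Z^{n_1+n_2}} = \sum_{j=0}^{k} \numberonsphere{j}{\Z^{n_1}}\, \numberonsphere{k-j}{\Z^{n_2}} \, ,
\]
so the theta series of $\Z^n$ factor as products. Since all of these counts are nonnegative, coefficientwise lower bounds multiply: if $\numberonsphere{k}{\Z^{n_1}} \ge a_k$ and $\numberonsphere{k}{\Z^{n_2}} \ge b_k$ for all $k$, then $\numberonsphere{k}{\Z^{n_1+n_2}} \ge \sum_{j} a_j b_{k-j}$. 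I will also use that appending a zero coordinate gives $\numberonsphere{k}{\Z^{n+1}} \ge \numberonsphere{k}{\Z^n}$. Finally, the negative binomial series lets me rewrite the target as
\[
  \binom{\floor{n/2}+k-1}{k} = [q^k]\,(1-q)^{-\floor{n/2}} \, ,
\]
so it suffices to bound the theta series of $\Z^n$ from below, coefficientwise, by $(1-q)^{-\floor{n/2}}$.

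Next I would dispose of the easy reductions. Because $\floor{(2m+1)/2} = m = \floor{(2m)/2}$, the append-a-zero bound reduces odd $n$ to the even case, so it suffices to treat even $n = 2m$ with $m \ge 3$. Claim~\ref{clm:Jacobi_proved_a_lot_of_stuff} supplies the two base cases $\numberonsphere{k}{\Z^6} \ge \binom{k+2}{2} = [q^k](1-q)^{-3}$ and $\numberonsphere{k}{\Z^8} \ge \binom{k+3}{3} = [q^k](1-q)^{-4}$. Whenever $m = 3a + 4b$ for nonnegative integers $a,b$, writing $\Z^{2m}$ as an orthogonal sum of $a$ copies of $\Z^6$ and $b$ copies of $\Z^8$ and multiplying the corresponding bounds yields exactly $(1-q)^{-(3a+4b)} = (1-q)^{-m}$. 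The numerical semigroup generated by $3$ and $4$ contains every integer $\ge 3$ except $5$, so this settles all even $n \ge 6$ except $n = 10$.

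The remaining case $n = 10$ is the crux, and here I would bring in the four-square bound. Using $\Z^{10} = \Z^6 \oplus \Z^4$ together with $\numberonsphere{k-i}{\Z^6} \ge \binom{k-i+2}{2}$ and the four-square bounds $\numberonsphere{i}{\Z^4} \ge 2i+3$ for odd $i$ and $\numberonsphere{i}{\Z^4} \ge 1$ for even $i$ (Lagrange), one gets $\numberonsphere{k}{\Z^{10}} \ge [q^k]\big(L(q)(1-q)^{-3}\big)$, where $L(q) = \sum_i \ell_i q^i$ collects these bounds. Setting $D(q) := L(q) - (1-q)^{-2}$, the goal becomes $[q^k]\big(D(q)(1-q)^{-3}\big) \ge 0$, i.e. $\sum_{i=0}^{k} d_i \binom{k-i+2}{2} \ge 0$ with $d_i = i+2$ for odd $i$ and $d_i = -i$ for even $i$. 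I would prove this by pairing each odd index $2j+1$ with the next even index $2j+2$: since the quadratic weights decrease in $i$ and $d_{2j+1} = 2j+3 > 2j+2 = -d_{2j+2}$, each pair contributes $(2j+2)(k-2j) + \binom{k-2j+1}{2} \ge 0$, while a possibly unpaired top odd term (when $k$ is odd) is positive. This gives $\numberonsphere{k}{\Z^{10}} \ge \binom{k+4}{4}$, and then $n = 11$ follows by appending a zero coordinate.

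The main obstacle is precisely this $n=10$ step. The loose bounds for $\Z^6$ and $\Z^4$ agree with the target $\binom{k+4}{4} \sim k^4/24$ only at leading order, so the positivity of the lower-order correction is essential rather than automatic; in particular, the pairing relies on the exact shape of the four-square estimate (the ``$+3$'' in $2i+3$ is what makes each pair nonnegative), which is presumably why Claim~\ref{clm:Jacobi_proved_a_lot_of_stuff} records that specific bound. Everything else is bookkeeping about the semigroup generated by $3$ and $4$ and the elementary fact that coefficientwise domination survives convolution.
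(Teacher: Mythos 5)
Your argument is correct, but it is organized quite differently from the paper's. The paper handles the even case by induction on $n$ in steps of four, writing $\Z^n = \Z^{n-4}\oplus\Z^4$ at \emph{every} step, keeping only the $i=0$ and odd-$i$ terms of the convolution, and closing the induction via the splitting $2i+3=(i+1)+(i+2)$ together with the combinatorial identity $\binom{m+k}{m}=\sum_{i=0}^k\binom{m+k-i-2}{m-2}(i+1)$. You instead note that every $m\ge 3$ except $m=5$ lies in the numerical semigroup generated by $3$ and $4$, so for every even $n\neq 10$ the bound is literally the coefficientwise product of the $(1-q)^{-3}$ and $(1-q)^{-4}$ lower bounds for $\Z^6$ and $\Z^8$ --- no induction and no use of $\Z^4$ at all --- and the four-square input is confined to the single case $n=10$, where your pairing argument correctly establishes $\sum_i d_i\binom{k-i+2}{2}\ge 0$ (the pair contribution $(2j+2)(k-2j)+\binom{k-2j+1}{2}$ checks out, as does the unpaired top term for odd $k$). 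What your route buys is isolation of the delicate computation to one place, at the cost of a small case analysis; the paper's route is uniform but must thread the $\Z^4$ estimate through every inductive step. Two trivial housekeeping points: you should record that $\numberonsphere{0}{\Z^6}=\numberonsphere{0}{\Z^8}=\numberonsphere{0}{\Z^4}=1$ matches the constant terms of your generating-function lower bounds (so that convolving lower bounds is valid at the endpoints $j=0$ and $j=k$), and the separate mention of $n=11$ is redundant given your odd-to-even reduction.
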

    \begin{proof}
        Notice that it suffices to prove the claim when $n$ is even, as the result when $n$ is odd then follows from the trivial inequality $\numberonsphere{k}{\Z^n} \geq \numberonsphere{k}{\Z^{n-1}}$.
        For even $n$, our proof is by induction on $n$, with base cases $n = 6$ and $n=8$ given by Claim~\ref{clm:Jacobi_proved_a_lot_of_stuff}. 

        For the induction step, we let $n > 8$ be even and we may assume that the claim holds for $n-4$. Then,
        \[
            \numberonsphere{k}{\Z^n} = %
            \sum_{i=0}^k \numberonsphere{k-i}{\Z^{n-4}}\numberonsphere{i}{\Z^{4}}\geq \numberonsphere{k}{\Z^{n-4}} + \sum_{\mystack{1 \leq i \leq k}{i \text{ odd}}} \numberonsphere{k-i}{\Z^{n-4}}\numberonsphere{i}{\Z^{4}} 
            \; .
        \]
        Applying the induction hypothesis and Claim~\ref{clm:Jacobi_proved_a_lot_of_stuff}, we see that
          \begin{align*}
    \numberonsphere{k}{\Z^n} 
    &\geq \binom{n/2+k-3}{k} + \sum_{\mystack{1 \leq i \leq k}{i \text{ odd}}} \binom{n/2+k-i-3}{k-i} (2i+3)\\
    &= \binom{n/2+k-3}{k} + \sum_{\mystack{1 \leq i \leq k}{i \text{ odd}}} \binom{n/2+k-i-3}{k-i} (i+1) + \sum_{\mystack{2 \leq j \leq k+1}{j \text{ even}}} \binom{n/2+k-j-2}{k-j+1} (j+1) \\
    &\geq \binom{n/2+k-3}{k} + \sum_{\mystack{1 \leq i \leq k}{i \text{ odd}}} \binom{n/2+k-i-3}{k-i} (i+1) + \sum_{\mystack{2 \leq j \leq k}{j \text{ even}}} \binom{n/2+k-j-3}{k-j} (j+1)\\
    &= \sum_{i=0}^k \binom{n/2 +k- i -3}{k-i}(i+1)\\
    &= \binom{n/2+k-1}{k}
    \; ,
  \end{align*}
  as claimed, where in the second line, we have applied the change of variables $j = i+1$, in the third line we have applied the inequality $\binom{m+1}{\ell+1} \geq \binom{m}{\ell}$, and the last equality can be proven combinatorially.\footnote{This last equality can be equivalently written as
  \[
    \binom{m+k}{m} = \sum_{i=0}^k \binom{m+k-i-2}{m-2}(i+1)
    \; ,
  \]
  which holds for all $m \geq 2$.
  To see this, consider choosing $m$ elements from the set $\{1,\ldots, m+k\}$ by first choosing the second-smallest element to be $i+2$ for some $0 \leq i \leq k$. There are then $i+1$ choices for the smallest element and $\binom{m+k-i-2}{m-2}$ choices for the remaining $m-2$ larger elements.}
    \end{proof}

\end{document}